\documentclass[12pt]{amsart}
\usepackage{graphicx} 
\usepackage{amssymb, enumerate}
\newcommand\Z{\mathbb{Z}}
\usepackage{parskip}
\usepackage{geometry}
\newcommand\Q{\mathbb{Q}}
\newcommand\C{\mathbb{C}}
\newcommand\Oo{\mathcal{O}}
\DeclareMathOperator{\Li}{Li}
\DeclareMathOperator{\disc}{disc}
\renewcommand\supset\supseteq
\newtheorem{prop}{Proposition}
\newtheorem{lem}[prop]{Lemma}
\newtheorem{thm}[prop]{Theorem}

\begin{document}

\title{Orders in the Euclidean court}

\author{Carlo Francisco Adajar}

     \address{Department of Mathematics \\ University of Georgia\\ Athens, GA 30602}
     \email{paco@uga.edu}

\author{Kevin J. McGown}
\address{Department of Mathematics and Statistics \\ California State University, Chico\\ Chico, CA 95929}
     \email{kmcgown@csuchico.edu}

\author{Paul Pollack}

     \address{Department of Mathematics \\ University of Georgia\\ Athens, GA 30602}
     \email{pollack@uga.edu}

\author{Enrique Treviño}
     \address{Department of Mathematics and Computer Science\\ Lake Forest College\\ Lake Forest, IL 60045}
     \email{trevino@lakeforest.edu}
\begin{abstract} Following Johnson, Queen, and Sevilla, we call an order $\mathcal{O}$ in a number field $K$ \textsf{generalized Euclidean} if for all $\alpha,\beta \in \Oo$ with $\beta\ne 0$ for which $\alpha\Oo+\beta\Oo$ is a principal ideal of $\Oo$, there are $\gamma, \rho \in \Oo$ with $\alpha =\beta \gamma + \rho$ and $|N_{K/\Q}(\rho)| < |N_{K/\Q}(\beta)|$. We prove that only finitely many quadratic orders are generalized Euclidean. Additionally, assuming GRH, we show that for each fixed $d > 1$ there are only finitely many integers $m$ for which $T^d-m$ is irreducible over $\Q$ and $\Z[m^{1/d}] = \Z[T]/(T^d-m)$ is generalized Euclidean. When $d$ is even, we can remove the assumption of GRH.
\end{abstract}

\subjclass{Primary 11R27; Secondary 11R11, 13F07}
\maketitle

\section{Introduction}
Let $K$ be a number field with ring of integers $\Oo_K$. One calls $\Oo_K$  \textsf{norm-Euclidean} if, for every pair of $\alpha, \beta \in \Oo_K$ with $\beta\ne 0$, there are $\gamma, \rho \in \Oo_K$ for which
\begin{equation}\label{E:euclid}
\alpha = \beta \gamma + \rho
\quad\text{and}\quad |N_{K/\Q}(\rho)| < |N_{K/\Q}(\beta)|
\,.
\end{equation}
If $\Oo_K$ is norm-Euclidean, then every pair of $\alpha, \beta \in \Oo_K$ has a gcd $\delta \in \Oo_K$ that can be determined, in principle, by the Euclidean algorithm. As explained in a first algebra course, it follows that $\Oo_K$ is a principal ideal domain and hence also a unique factorization domain.

Here we consider orders in number fields, not just the ring of integers.  Moreover, we work with a more general notion, first
introduced by Johnson--Queen--Sevilla (for rings of integers in quadratic fields) \cite{JQS85}.
We call an order $\Oo$ \textsf{generalized Euclidean} if for all $\alpha,\beta\in\Oo$ with $\beta\neq 0$ and $(\alpha,\beta)$ principal, there exists $\gamma, \rho\in \Oo$ satisfying (\ref{E:euclid}).
In other words, if the gcd of $\alpha$ and $\beta$ exists (in a strong form), then it can be determined (in principle) using the Euclidean algorithm (relative to the norm).
(To align more with current practice, we should perhaps use the terminology 
``generalized norm-Euclidean'', but we have chosen to use the shorter ``generalized Euclidean''.)
A partial classification of quadratic rings of integers that are generalized Euclidean was given in~\cite[Theorems 1 and 2]{JQS85},
but they did not fully treat the case of $\Q(\sqrt{d})$ when $d\equiv 1\pmod{4}$.
We prove a finiteness theorem for general quadratic orders.

\begin{thm}\label{thm:mainquadratic} There are finitely many quadratic orders that are generalized Euclidean.
\end{thm}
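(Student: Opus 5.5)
We will show that every quadratic order $\Oo$ of discriminant $D$ with $|D|$ sufficiently large fails to be generalized Euclidean. The proof splits into imaginary orders ($D<0$) and real orders ($D>0$). In both cases we look for a pair $\alpha,\beta$ with $(\alpha,\beta)$ principal, most conveniently $(\alpha,\beta)=\Oo$, for which no remainder works. If $\Oo$ is generalized Euclidean, then for each $\beta$ every residue class modulo $\beta$ that is coprime to $\beta$ (and such classes generate the unit ideal) contains an element $\rho$ with $|N(\rho)|<|N(\beta)|$. The plan is to choose $\beta=p$, a rational prime that splits or stays inert in $\Oo$ and does not divide the conductor, and to count how many classes of $(\Oo/p\Oo)^\times$ can contain elements of norm less than $p^2$.

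\emph{Imaginary case.} Write $\Oo=\Z[\omega]$ with $N(x+y\omega)$ a positive definite binary quadratic form of discriminant $D$. The elements of norm below $p^2$ have $|y|\ll p/\sqrt{|D|}$ and $|x|\ll p$. Hence there are at most about $p^2/\sqrt{|D|}$ of them, up to a constant. On the other hand, $(\Oo/p\Oo)^\times$ has about $p^2$ elements. Choosing any admissible prime $p$ with $p$ small relative to $|D|$, or even taking $\beta$ to be a suitable rational integer $n$ coprime to the conductor, the count of short elements falls below $\#(\Oo/n\Oo)^\times\asymp n^2$ once $|D|$ exceeds an absolute constant. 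This produces a unit class mod $n$ with no small representative and finishes the imaginary case. This is just the classical argument, made uniform in the conductor.

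\emph{Real case.} This is the main obstacle, because units make the norm form indefinite. Every class then has infinitely many representatives, and small norms are controlled by the values of the form rather than by a box count. The approach I would follow is the one used for norm-Euclidean real quadratic fields. Fix $\beta=p$ and use the unit group to reduce each $\rho$ to a fundamental domain. In that domain, $|N(\rho)|<p^2$ says that $\rho$ is associate to an element whose ideal has norm less than $p^2$. So the classes mod $p$ hit by small-norm elements are exactly the images of elements $\varepsilon^k\sigma$, where $\sigma$ ranges over a finite set of representatives with $|N(\sigma)|<p^2$ and $\varepsilon$ is the fundamental unit.

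The key step is then to exhibit a class mod $p$ that avoids all of these images. Let $q<p$ be an auxiliary prime. Consider residues $\rho\bmod p$ whose norm $N(\rho)\bmod p$ is a quadratic nonresidue, or more generally lies in a prescribed character class. Any $\rho$ with $|N(\rho)|<p^2$ in such a class would force $N(\rho)$ to be a small integer represented by the norm form with prescribed residue symbols. We would follow the McGown-style strategy: choose $p$ and a small value $m$ so that $m$ is a non-norm locally (via genus characters, a Legendre symbol condition relative to $D$). This forces any valid remainder to have norm $\equiv m$ with $|N|<p^2$ impossible. The explicit character-sum or smallest-nonresidue bounds (Burgess-type, unconditional) supply such $p$ and $m$ with $p$ small compared to $\sqrt{D}$. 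The remaining finitely many discriminants would then be finite in number, which is all the theorem claims. The hard part will be making the non-residue and prime selection uniform for non-maximal orders and for even $D$. There I would first try reducing to the maximal order's character-theoretic obstruction, lifting residues through the conductor.
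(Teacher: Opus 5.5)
Your imaginary-case count is correct, and more elementary than what the paper does. The real case, however, has a genuine gap. With $\beta=p$ a rational prime not dividing $D$, the congruence $\rho\equiv\alpha\pmod{p\Oo}$ only gives $N(\rho)\equiv N(\alpha)\pmod p$, because $N(\alpha+p\gamma)=N(\alpha)+p\,\mathrm{Tr}(\alpha\bar\gamma)+p^2N(\gamma)$. The allowed range is $|N(\rho)|<p^2$, so roughly $2p$ integers survive as candidate norms, not one value $m$. Nothing you propose rules them all out:
\begin{itemize}
\item A condition on $N(\rho)\bmod p$ is no obstruction, since for $p\nmid D$ the norm form represents every residue class mod $p$.
\item Genus characters only see residues modulo primes dividing $D$, and $\rho\equiv\alpha\pmod{p\Oo}$ says nothing about those.
\item Counting cannot rescue $\beta=p$ either. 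The elements $n\varepsilon^k$ with $|n|<p$ have $|N|=n^2<p^2$. Whenever $\varepsilon$ generates $(\Oo/p\Oo)^\times/(\Z/p\Z)^\times$, they already meet every unit class mod $p$.
\end{itemize}
So your sentence claiming that any valid remainder would have an ``impossible'' norm is exactly the step that is never justified, and with $\beta=p$ it cannot be.

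The missing idea is to divide by an element whose norm equals the modulus of the induced congruence on norms. This is Heilbronn's choice: $\beta=\sqrt D$ and $\alpha=A\in\Z$ with $\gcd(A,D)=1$ and $A^2\equiv a\pmod D$. Then $A\Oo+\sqrt D\,\Oo\supseteq A\Z+D\Z=\Z$, so the ideal $(\alpha,\beta)$ is principal. Conjugation preserves $\sqrt D\,\Oo$, so every remainder satisfies $N(\rho)\equiv A^2\equiv a\pmod D$. Together with $|N(\rho)|<|D|$, this forces $N(\rho)\in\{a,\,a-|D|\}$.

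It then suffices to write $|D|=a+b$ with $a,b>0$, $a$ a unit square mod $D$, and both $a$ and $-b$ non-norms. The paper does this as follows:
\begin{itemize}
\item It takes $a=q_1a'$ and $b=q_2b'$ with $q_1\parallel a$ and $q_2\parallel b$, where $q_1<q_2\le|D|^{1/6}$ are primes with $(D/q_i)=-1$.
\item Such primes exist for large $|D|$ by Pollack's bound on small prime non-residues of characters.
\item The integer $a'$ is produced by a character-sum count using P\'olya--Vinogradov.
\item An inert prime divides every norm from $\Oo\subseteq\Oo_K$ to an even power, so $a$ and $-b$ are not norms.
\end{itemize}
This treats real and imaginary, maximal and nonmaximal, odd and even $D$ uniformly, with no units or fundamental domains. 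In particular, your worry about lifting through the conductor disappears.
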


One might wonder whether we gain anything by considering nonmaximal orders in Theorem \ref{thm:mainquadratic}. Nonmaximal orders can never be Euclidean, as they are never unique factorization domains. However, it is possible for a nonmaximal order to be generalized Euclidean.  This is the case, for instance, for $\Z[\sqrt{-3}]$, which is nonmaximal with respect to the field $K:=\Q(\sqrt{-3})$. 

Lacking a reference, we prove here that $\Z[\sqrt{-3}]$ is generalized Euclidean. Let $\alpha, \beta \in \Z[\sqrt{-3}]$ with $\beta \ne 0$, and write $\alpha/\beta = U + V\sqrt{-3}$ with $U, V \in \Q$. Let $u, v$ be the integers nearest to $U, V$, rounding down when the distance to the nearest integer is precisely $\frac{1}{2}$. Let $\gamma = u+v\sqrt{-3}$ and define $\rho$ by \eqref{E:euclid}. Then $N_{K/\Q}(\rho) = N_{K/\Q}(\beta) ((U-u)^2 + 3(V-v)^2) < N_{K/\Q}(\beta)$, unless $U= u + \frac12$ and $V = v + \frac12$.  (We omit absolute value signs around our norms, since norms from $\Q(\sqrt{-3})$ are nonnegative.) So to show $\Z[\sqrt{-3}]$ is generalized Euclidean, it is enough to show that  $I:=(\alpha, \beta)$ is nonprincipal in this exceptional case. When this case arises, $2\alpha = 2\beta \gamma + (1+\sqrt{-3}) \beta$, so that
\[ 2I = (2\alpha,2\beta) = (2\beta,(1+\sqrt{-3})\beta) = \beta (2,1+\sqrt{-3}). \] 
Suppose for a contradiction that $I$ is principal, say $I = (\delta)$. Then $2I = (2\delta)$, $\beta \mid 2\delta$, and $J:=(2,1+\sqrt{-3}) = (2\delta/\beta)$ is principal. But it is straightforward to check that $J$ is nonprincipal. For instance, $2$ and $1+\sqrt{-3}$ are nonassociate irreducibles in $\Z[\sqrt{-3}]$, so that $J$ could not be generated by anything other than a unit. But every element of $J$ has even norm, and so $1\notin J$.

We may also consider more general orders.
Fix an integer $d\ge 2$. If $m$ is an integer for which the polynomial $F_{d,m}(T):=T^d-m$ is irreducible over $\Q$, we let $K_{d,m} = \Q(m^{1/d})$ and $\Oo_{d,m}:= \Z[m^{1/d}]$. Then $\Oo_{d,m}$ is an order inside the number field $K_{d,m}$; we will refer to $\Oo_{d,m}$ as a \textsf{pure order of degree~$d$}.

\begin{thm}\label{thm:maingeneral} Fix an even positive integer $d$. There are finitely many pure orders of degree $d$ that are generalized Euclidean.
\end{thm}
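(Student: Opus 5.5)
Since $d$ is even, write $d=2e$. The plan is to reduce to the quadratic case by exploiting the subfield $\Q(\sqrt{m})$, or more precisely the subring $\Z[\sqrt{m}]=\Z[(m^{1/d})^{e}]$ of $\Oo_{d,m}$. Because $T^d-m$ is irreducible, $m$ is not a square, so $\Z[\sqrt m]$ is a genuine quadratic order. The key structural fact is that $\Oo_{d,m}$ is a free $\Z[\sqrt m]$-module with basis $1,\theta,\dots,\theta^{e-1}$, where $\theta=m^{1/d}$. Indeed $\theta^e=\sqrt m$, so $\Oo_{d,m}=\Z[\sqrt m][\theta]$ with $\theta$ a root of $X^e-\sqrt m$.

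We then aim to show that if $\Oo_{d,m}$ is generalized Euclidean, so is $\Z[\sqrt m]$. Combined with Theorem~\ref{thm:mainquadratic}, this leaves only finitely many possible $m$.

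\textbf{Step 1 (norm compatibility).} For $\rho\in\Oo_{d,m}$ we have $N_{K_{d,m}/\Q}(\rho)=N_{\Q(\sqrt m)/\Q}(N_{K_{d,m}/\Q(\sqrt m)}(\rho))$. For $\beta\in\Z[\sqrt m]$ this gives $|N_{K/\Q}(\beta)|=|N_{\Q(\sqrt m)/\Q}(\beta)|^{e}$. This alone does not transfer a remainder $\rho\in\Oo_{d,m}$ back to $\Z[\sqrt m]$, so the transfer must be done directly.

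\textbf{Step 2 (principality transfers).} Suppose $\alpha,\beta\in\Z[\sqrt m]$ and $\alpha\Z[\sqrt m]+\beta\Z[\sqrt m]=\delta\Z[\sqrt m]$. Tensoring with the free extension $\Oo_{d,m}$ gives $\alpha\Oo_{d,m}+\beta\Oo_{d,m}=\delta\Oo_{d,m}$, which is principal. So the hypothesis provides $\gamma,\rho\in\Oo_{d,m}$ with $\alpha=\beta\gamma+\rho$ and $|N(\rho)|<|N(\beta)|$.

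\textbf{Step 3 (descend the division).} Write $\gamma=\sum_{i<e}\gamma_i\theta^i$ with $\gamma_i\in\Z[\sqrt m]$. Comparing $\theta^0$-coordinates gives $\alpha=\beta\gamma_0+\rho_0$, where $\rho_0\in\Z[\sqrt m]$ is the constant coordinate of $\rho$. The issue is then to show $|N_{\Q(\sqrt m)/\Q}(\rho_0)|<|N_{\Q(\sqrt m)/\Q}(\beta)|$.

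My first attempt would use the trace-like projection $\rho_0=\frac1e\sum_{\zeta^e=1}\sigma_\zeta(\rho)$. Here $\sigma_\zeta:\theta\mapsto\zeta\theta$ are the $\Q(\sqrt m)(\zeta_e)$-embeddings fixing $\sqrt m$. With AM–GM this gives an inequality in the wrong direction, so it does not work as stated.

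A more robust route avoids descent altogether. Repeat the quadratic proof's obstruction inside $\Oo_{d,m}$: the argument for Theorem~\ref{thm:mainquadratic} presumably produces, for large discriminant, a specific pair $\alpha,\beta$ with $(\alpha,\beta)$ principal and no $\gamma$ with small remainder. Typically $\beta$ is a small prime element and $\alpha$ ranges over residues.

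\textbf{Step 4 (counting remainders).} In $\Oo_{d,m}$ the analogous obstruction is a counting statement. If every class mod $\beta$ coprime to $\beta$ has a representative of norm less than $|N(\beta)|$, then, since $|N(\rho)|\ge$ the size of the relevant ideal norm, the residue ring $\Oo_{d,m}/\beta$ must be covered by small elements. Using the $\theta^0$-coordinate projection from Step 3, together with the $\Z[\sqrt m]$-linearity of the coordinate maps, reduces this to the quadratic situation. The hoped-for conclusion is that some coordinate of $\rho$ gives a quadratic remainder of norm less than $|N(\beta)|^{1/e}$.

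I expect this norm comparison, between $\rho$ and its $\Z[\sqrt m]$-coordinates, to be the main obstacle. I would first try to choose $\beta$ to be a rational integer or a rational prime. Then $\Oo_{d,m}/\beta\cong(\Z[\sqrt m]/\beta)^e$ coordinatewise, so membership and remainder questions decouple across coordinates. I would rely on the specific shape of the quadratic counterexample to handle whatever norm inequality remains.
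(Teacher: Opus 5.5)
Your reduction breaks at Step 3, as you partly concede. Nothing in the proposal shows that generalized Euclideanity of $\Oo_{d,m}$ passes down to $\Z[\sqrt m]$, and there is no reason to expect it.

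\textbf{Why the descent fails.} The hypothesis gives only a remainder $\rho\in\Oo_{d,m}$ with $\rho\equiv\alpha\pmod{\beta\Oo_{d,m}}$ and $|N_{K_{d,m}/\Q}(\rho)|<|N_{\Q(\sqrt m)/\Q}(\beta)|^{e}$. A bound on a degree-$d$ norm form says nothing about individual $\Z[\sqrt m]$-coordinates. For even $d\ge 4$ the unit rank is positive, so there are infinitely many units $\varepsilon\equiv 1\pmod{\beta\Oo_{d,m}}$. Each $\rho\varepsilon$ satisfies the same congruence and has the same norm, so the norm bound cannot constrain the constant coordinate $\rho_0$.

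The canonical way back into $\Z[\sqrt m]$ is the relative norm $N_{K_{d,m}/\Q(\sqrt m)}(\rho)$. It is congruent to $\alpha^{e}$, not $\alpha$, modulo $\beta$, and its quadratic norm is bounded only by $|N(\beta)|^{e}$. That is the wrong scale for a remainder.

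Step 4 does not repair this. Taking $\beta\in\Z$ decouples residues coordinatewise, but it does not decouple norms.

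Your guess about the quadratic obstruction is also off. In the paper it is the pair $(A,\sqrt D)$, with $N(\sqrt D)=-D$, not a small prime $\beta$. Transplanting $(A,\sqrt m)$ into $\Oo_{d,m}$ fails for the same scale reason: $|N_{K_{d,m}/\Q}(\sqrt m)|=|m|^{e}$, while the congruence this gives for $N(\rho)$ is only modulo $m$.

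\textbf{The missing idea.} Build the obstruction directly in $\Oo_{d,m}$, using $\beta=m^{1/d}$, whose norm has absolute value exactly $|m|$. Suppose $\gcd(A,m)=1$ and $A^d\equiv a\pmod m$. Then $(A,m^{1/d})$ is the unit ideal. Any $\rho\equiv A\pmod{m^{1/d}\Oo_{d,m}}$ has constant coordinate $\equiv A\pmod m$, so Lemma~\ref{lem:normform} gives $N(\rho)\equiv a\pmod m$. Together with $|N(\rho)|<|m|$ this forces $N(\rho)\in\{a,-b\}$, where $b=|m|-a$; this is Proposition~\ref{prop:pureorderheilbronn}.

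It then remains to find $a+b=|m|$ with $a$ a $d$th-power unit modulo $m$ and with $a$ and $-b$ both non-norms. The paper proceeds as follows:
\begin{enumerate}
\item It takes primes $2<q_1<q_2\le|m|^{1/6}$ with $(\frac{m}{q_i})=-1$, using Pollack's small non-residue theorem for the Kronecker character of $4m$.
\item It counts $a=q_1a'$, $b=q_2b'$ with $q_1\parallel a$ and $q_2\parallel b$, using character sums and P\'olya--Vinogradov.
\item It uses evenness of $d$ exactly once. A norm exactly divisible by $q_1$ forces a degree-one prime of $\Oo_K$ above $q_1$, so $m=(m^{1/d})^d$ would be a $d$th power, hence a square, modulo $q_1$.
\end{enumerate}
The quadratic subfield thus enters only through the quadratic character of $m$, not through a reduction to Theorem~\ref{thm:mainquadratic}.
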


\begin{thm}\label{thm:maingeneralGRH} Assume GRH.  For any fixed integer $d\ge 2$, there are finitely many pure orders of degree $d$ that are generalized Euclidean.
\end{thm}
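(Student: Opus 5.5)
The plan is to exhibit, for every sufficiently large $|m|$, an explicit pair $\alpha,\beta\in\Oo_{d,m}$ with $(\alpha,\beta)=\Oo_{d,m}$, hence principal, for which no admissible remainder exists. Write $\theta=m^{1/d}$. I would take $\beta=q$, a rational prime to be chosen small compared with $|m|$, and $\alpha=a\in\Z$ with $q\nmid a$; then $(a,q)=(1)$ is principal. Suppose $\rho=a-q\gamma$ with $|N(\rho)|<|N(q)|=q^d$. Write $\rho=r_0+r_1\theta+\dots+r_{d-1}\theta^{d-1}$. Then $r_0\equiv a\pmod q$ and $r_i\equiv 0\pmod q$ for $i\ge1$.

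I would extract two congruences for $n:=N(\rho)$.
\begin{enumerate}[(i)]
\item Reducing modulo $q$ gives $\rho\equiv a \pmod{q\Oo_{d,m}}$, and the norm is a polynomial in the coordinates with integer coefficients. Hence $n\equiv a^d\pmod q$.
\item Since $T^d-m\equiv T^d\pmod m$, every conjugate of $\rho$ is congruent to $r_0$ modulo the ideal generated by $\theta$. Equivalently, computing $N(\rho)$ as the resultant $\mathrm{Res}(T^d-m,\,r_0+r_1T+\cdots)$, we get $n\equiv r_0^d\pmod m$.
\end{enumerate}
So $n$ is a $d$-th power residue modulo $m$, and in particular modulo any prime $p\mid m$. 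Moreover $|n|<q^d$, which is tiny compared with $p$ once $q$ is small. The aim is to choose $q$ and $a$ so that no integer $n$ with $0<|n|<q^d$ meets both (i) and (ii). We have $n\neq0$ since $\rho\neq0$, because $q\nmid a$.

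To arrange the incompatibility, I would fix a large prime power $p^e\,\|\,m$, or handle the alternative that $m$ is divisible only by small primes separately (see below). I would then use a $d$-th power residue character $\chi$ modulo $p$ that is nontrivial on small integers. The cleanest version is to choose $q$ itself so that $\chi(q)\neq1$. Then the integers $n$ with $|n|<q^d$ that are $d$-th powers modulo $p$ form a restricted set. For instance, if every prime below $q$ is a $d$-th power residue, then every $n$ composed of primes $<q$ is too. I would then use the freedom in $a$ modulo $q$ to make $a^d$ avoid all residues mod $q$ of the finitely many candidates $n$. This may require replacing the single prime $q$ by a product, or choosing $\beta$ a prime of a residue degree adapted to the splitting of $q$. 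Under GRH, Ankeny's bound gives a prime $q\ll(\log p)^2$ with $\chi(q)\neq1$. For even $d$ one may instead take $\chi$ to be the Legendre symbol, a power of which is a $d$-th power character when $d$ is even, and use an unconditional least-quadratic-nonresidue bound such as Burgess or Linnik--Vinogradov, valid outside a sparse exceptional set handled separately. This is the mechanism behind the distinction between Theorem~\ref{thm:maingeneral} and the present statement.

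The main obstacle is the combinatorial step. We must show that the residues $a^d$ (for $q\nmid a$) cannot all be hit modulo $q$ by integers $n$ with $|n|<q^d$ that are $d$-th powers modulo $p$. The count of such $n$ is comparable to $q^d$, so a naive pigeonhole argument fails. I would first try to use the full structure of (ii): $n$ is a $d$-th power residue modulo every prime divisor of $m$ simultaneously. By choosing $q$ as the least prime that is a non-residue for some relevant character, all primes below $q$ are residues while $q$ is not. That restricts the $q$-part of $n$, and (i) then forces $q\nmid n$ once $q\nmid a$. If this is insufficient, I would enlarge $\beta$ to $q^k$, or to a product of two small primes, to gain extra congruence constraints. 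Finally, the case where $m$ has no large prime factor, say $m$ smooth or a high power, has to be treated separately. There I would apply the same argument modulo a suitable prime power of $m$, or use the fact that such $m$ are sparse enough to handle with a different choice of $\beta$.
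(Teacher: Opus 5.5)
\textbf{The proposal has a fatal gap at its first step.} A pair $\alpha=a$, $\beta=q$ of rational integers can never show that the generalized Euclidean property fails. Choose $r_0\in\Z$ with $r_0\equiv a\pmod q$ and $|r_0|\le q/2$, and put $\gamma=(a-r_0)/q\in\Z\subset\Oo_{d,m}$ and $\rho=r_0$. Then $|N(\rho)|=|r_0|^d\le (q/2)^d<q^d=|N(q)|$.

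In your own notation, the integer $n=r_0^d$ satisfies $0<|n|<q^d$, $n\equiv a^d\pmod q$, and trivially $n\equiv r_0^d\pmod m$. So the incompatibility between (i) and (ii) that you aim to arrange is impossible for every choice of $q$ and $a$, whatever residue-character information you have. The same objection rules out your fallbacks $\beta=q^k$ and $\beta=q_1q_2$, since any $\beta\in\Z$ divides successfully via division in $\Z$. The alternative of ``a prime of a residue degree adapted to the splitting of $q$'' is never turned into an argument. The underlying problem is that when $|N(\beta)|$ is tiny compared with $|m|$, congruence (ii) carries no information: small $d$th powers of integers are automatically $d$th power residues mod $m$.

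\textbf{The missing idea is to divide by $\beta=m^{1/d}$ itself.} Its norm is $\pm m$, and this makes your congruence (ii), which is exactly Lemma~\ref{lem:normform}, decisive.
\begin{enumerate}
\item Write $|m|=a+b$ with $a,b>0$, and take $\alpha=A\in\Z$ coprime to $m$ with $A^d\equiv a\pmod m$. Then $(A,m^{1/d})=\Oo_{d,m}$ is principal.
\item Any remainder $\rho$ has constant coordinate $\equiv A\pmod m$, so $N(\rho)\equiv a\pmod m$. Together with $|N(\rho)|<|m|$, this forces $N(\rho)\in\{a,-b\}$.
\item Choose $a=q_1a'$ and $b=q_2b'$ with $q_1\parallel a$ and $q_2\parallel b$. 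Here $q_1,q_2\nmid m$ are small primes modulo which $m$ is not a $p$th power, with $p$ the least prime factor of $d$.
\item If $N(\rho)=a$, the factorization of $\rho\Oo_K$ contains a degree-one prime $Q_1$ above $q_1$. Then $m=(m^{1/d})^d$ would be a $d$th power in $\Oo_K/Q_1\cong\Z/q_1\Z$, a contradiction; the case $N(\rho)=-b$ is identical with $q_2$.
\end{enumerate}

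GRH enters only through the Chebotarev density theorem in $\Q(m^{1/p},\zeta_p)$, which gives enough such primes $q$ below $(\log|m|)^3$. This is a condition on $m$ modulo $q$, not, as in your Ankeny step, a condition on $q$ modulo a prime factor of $m$. The existence of a suitable $a'$ — with $q_1a'$ a $d$th power residue coprime to $m$ and the exact-divisibility conditions holding — comes from a P\'olya--Vinogradov character-sum count. No separate treatment of smooth $m$ or high prime powers is needed.
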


One might be interested in comparing Theorems \ref{thm:maingeneral} and \ref{thm:maingeneralGRH} with results of Egami (see~\cite{egami84}) for the fields $\Q(m^{1/p})$, $p$ prime. However, the notion of $\Q(m^{1/p})$ containing a norm-Euclidean ideal class, considered by Egami, is different than the notion we consider here, so neither set of results implies the other.

\section{Preparation}
At the heart of all our proofs are the following variants of \textsf{Heilbronn's criterion} for non-Euclideanity. 
Heilbronn's criterion was first introduced in \cite{heilbronn38}, but can be considered implicit in earlier work of Erd\H{o}s and Ko \cite{EK38}. We remark that it is condition (i) in both Propositions \ref{prop:pureorderheilbronn}  and \ref{prop:quadraticheilbronn} that allows the stronger conclusion of ``not generalized Euclidean'' versus ``not norm-Euclidean''.

\begin{prop}[Heilbronn's criterion for pure orders]\label{prop:pureorderheilbronn}
Suppose $d\ge 2$.  Let $\Oo_{d,m}$ be a pure order of degree $d$. Suppose there are positive integers $a,b$ with 
\[ |m| = a+b,\]
where 
\begin{enumerate}
\item $\gcd(a,m)=1$,
\item $a$ is a $d$th power modulo $m$,
\item neither $a$ nor $-b$ has the form $N_{K_{d,m}/\Q}(\alpha)$ for any $\alpha \in \Oo_{d,m}$. 
\end{enumerate}
Then $\Oo_{d,m}$ is not generalized Euclidean.
\end{prop}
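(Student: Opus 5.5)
We argue as in Heilbronn's original criterion: we exhibit a specific pair $\alpha,\beta\in\Oo_{d,m}$ with $(\alpha,\beta)=\Oo_{d,m}$ (so certainly principal) for which no admissible remainder exists. Write $\theta=m^{1/d}$, so that $\theta^d=m$, and recall $N(\theta)=\pm m$; more precisely $N_{K_{d,m}/\Q}(\theta)=(-1)^{d+1}m$, since $\theta$ is a root of $T^d-m$. By hypothesis (ii) choose an integer $c$ with $c^d\equiv a\pmod m$. We take $\beta=\theta$ and $\alpha=c$. Then $|N(\beta)|=|m|$.

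\textbf{Principality of $(\alpha,\beta)$.} By (i), $\gcd(a,m)=1$, so $\gcd(c,m)=1$. Pick integers $x,y$ with $xc+ym=1$. Since $m=\theta^d\in(\theta)$, we get $1=xc+y\theta^{d}\in(c,\theta)$. Hence $(\alpha,\beta)=\Oo_{d,m}$ is principal, and the generalized Euclidean property would force the existence of $\gamma\in\Oo_{d,m}$ with $|N(c-\theta\gamma)|<|m|$.

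\textbf{Controlling the remainder's norm.} Let $\rho=c-\theta\gamma$. The key step is to show $N(\rho)\equiv c^d\equiv a\pmod m$. To see this, work in $\Oo_{d,m}/\theta\Oo_{d,m}\cong \Z[T]/(T^d-m,T)\cong\Z/m\Z$. The norm is the determinant of multiplication by $\rho$ on the $\Z$-basis $1,\theta,\dots,\theta^{d-1}$. Writing $\gamma=\sum g_i\theta^i$, the matrix of multiplication by $\theta\gamma$ has, in the coordinates where $\theta$ acts as the companion matrix of $T^d-m$, every entry above or on the diagonal divisible by $m$. Reducing mod $m$, multiplication by $\theta\gamma$ becomes strictly lower triangular (nilpotent) and $c$ is scalar, so $\det(c-\theta\gamma)\equiv c^d\pmod m$. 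This triangularity computation is the only delicate point; the cleaner alternative is to note that $\theta$ is nilpotent modulo $m$, as $\theta^d=m$, so multiplication by $\theta\gamma$ on $\Oo_{d,m}/m\Oo_{d,m}$ is nilpotent, and the determinant of scalar plus nilpotent is $c^d$. I would present the latter.

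\textbf{Conclusion.} Thus $N(\rho)\equiv a\pmod{|m|}$ and $|N(\rho)|<|m|=a+b$. The integers in $(-|m|,|m|)$ congruent to $a$ modulo $|m|$ are exactly $a$ and $a-|m|=-b$, since $0<a<|m|$. So $N(\rho)\in\{a,-b\}$, contradicting (iii). Hence $\Oo_{d,m}$ is not generalized Euclidean. The main obstacle is the congruence $N(c-\theta\gamma)\equiv c^d$, handled by the nilpotence argument above.
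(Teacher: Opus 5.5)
Your overall plan matches the paper's: take $\beta=m^{1/d}$ and $\alpha=c$ with $c^d\equiv a\pmod m$, note that $(\alpha,\beta)=\Oo_{d,m}$, then get a contradiction from $N(\rho)\equiv a\pmod m$ together with $|N(\rho)|<|m|$. The only substantive step is that congruence, and the version you say you would present has a genuine gap. Over $\Z/m\Z$ with $m$ not squarefree, a scalar plus a nilpotent endomorphism of a free module need \emph{not} have determinant $c^d$, because $\Z/m\Z$ has nonzero nilpotents. For example, $N=\mathrm{diag}(2,0)$ over $\Z/4\Z$ is nilpotent, but $\det(cI+N)=c^2+2c\not\equiv c^2$ for odd $c$. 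Nilpotence of multiplication by $\theta\gamma$ only forces the characteristic polynomial to be $T^d$ modulo each prime $p\mid m$, so it yields $N(\rho)\equiv c^d\pmod{\mathrm{rad}(m)}$. That is too weak for your last step, which needs the congruence modulo the full $|m|$ to force $N(\rho)\in\{a,-b\}$. Non-squarefree $m$ cannot be set aside: Theorems~\ref{thm:maingeneral} and~\ref{thm:maingeneralGRH} need every large $|m|$, e.g.\ $\Z[\sqrt{12}]$.

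The fix is already in your proposal, because your triangularity computation is correct. In the basis $1,\theta,\dots,\theta^{d-1}$, multiplication by $\theta^k$ for $1\le k\le d-1$ has $1$'s strictly below the diagonal and $m$'s strictly above it, and $\theta^d=m$ acts as $mI$. So modulo $m$ the matrix of $c-\theta\gamma$ is triangular with every diagonal entry equal to $c$, and $N(\rho)\equiv c^d\pmod m$. Equivalently, use the flag $\Oo\supset\theta\Oo\supset\cdots\supset\theta^d\Oo=m\Oo$, whose images in $\Oo/m\Oo$ are free summands with bases $\theta^i,\dots,\theta^{d-1}$; the bare word ``nilpotent'' discards exactly this structure. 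Present that argument.

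Compared with the paper, this is a more elementary route to the key congruence. The paper proves Lemma~\ref{lem:normform}, $\tilde N\in x_0^d+m\Z[x_0,\dots,x_{d-1}]$, by symmetric functions evaluated at $\omega_jZ$. It then separately checks that the constant coordinate $A_0$ of the remainder satisfies $A_0\equiv A\pmod m$. Applied to a general $x_0+x_1\theta+\cdots+x_{d-1}\theta^{d-1}$, your matrix argument proves Lemma~\ref{lem:normform} in one line. Applied directly to $c-\theta\gamma$, it makes the $A_0\equiv A$ step unnecessary.
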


Recall that to each quadratic order $\Oo$ there is associated a unique integer $D$ for which either
\begin{equation}\label{eq:firstform} \Oo = \{a+b\sqrt{D}: a, b \in \Z\} \end{equation}
or
\begin{equation}\label{eq:secondform} \Oo = \bigg\{\frac{1}{2}(a+b\sqrt{D}): a, b \in \Z, a \equiv b\pmod{2}\bigg\}. \end{equation}
The first case occurs when the order discriminant $\Delta$ is a multiple of $4$, in which case $D = \Delta/4$, and the second case when $\Delta\equiv 1\pmod{4}$, in which case $D=\Delta$. In both cases, $D$ is not a perfect square.

\begin{prop}[Heilbronn's criterion for quadratic orders]\label{prop:quadraticheilbronn} Let $\Oo$ be an order in a quadratic field $K$, and let $D\in\Z$ be such that \eqref{eq:firstform} or \eqref{eq:secondform} holds. Suppose there are positive integers $a,b$ with 
\[ |D| = a+b,\]
where 
\begin{enumerate}
\item $\gcd(a,D)=1$,
\item $a$ is a square modulo $D$,
\item neither $a$ nor $-b$ has the form $N_{\Q(\sqrt{D})/\Q}(\alpha)$ for any $\alpha \in \Oo$. 
\end{enumerate}
Then $\Oo$ is not generalized Euclidean.
\end{prop}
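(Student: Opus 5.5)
We will argue directly from the definition, producing an explicit pair $\alpha,\beta$ for which the ideal $\alpha\Oo+\beta\Oo$ is principal but no admissible remainder exists. We take $\beta=\sqrt{D}\in\Oo$, so that $|N(\beta)|=|D|$, where $N=N_{\Q(\sqrt D)/\Q}$. By (ii) there is an integer $c$ with $c^2\equiv a \pmod{|D|}$. Since $\gcd(a,D)=1$ by (i), we also have $\gcd(c,D)=1$. We take $\alpha=c\in\Z\subset\Oo$.

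\emph{Step 1: the ideal is principal.} The ideal $c\Oo+\sqrt D\,\Oo$ contains $c$ and $\sqrt D\cdot\sqrt D=D$. Since $\gcd(c,D)=1$, it contains $1$, so it is the unit ideal $\Oo$, which is principal. Hence the generalized Euclidean hypothesis applies to this pair.

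\emph{Step 2: every remainder has norm $\equiv a \pmod{|D|}$.} Let $\gamma\in\Oo$ and set $\rho=c-\sqrt D\,\gamma$.
\begin{itemize}
\item[$\bullet$] In case \eqref{eq:firstform}, write $\gamma=u+v\sqrt D$. Then $\rho=(c-vD)-u\sqrt D$, and
\[
N(\rho)=(c-vD)^2-Du^2\equiv c^2\equiv a \pmod{|D|}.
\]
\item[$\bullet$] In case \eqref{eq:secondform}, write $\gamma=\tfrac12(u+v\sqrt D)$ with $u\equiv v\pmod 2$. Then $\rho=\tfrac12\bigl((2c-vD)-u\sqrt D\bigr)$, and
\[
4N(\rho)=(2c-vD)^2-Du^2\equiv 4c^2 \pmod{|D|}.
\]
Here $N(\rho)\in\Z$ because $\rho\in\Oo$. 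Since $D=\Delta\equiv1\pmod 4$ is odd, we may cancel the $4$ and obtain $N(\rho)\equiv a\pmod{|D|}$.
\end{itemize}
This parity bookkeeping in the second form is the only place requiring any care, and it is routine.

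\emph{Step 3: conclusion.} Suppose $|N(\rho)|<|N(\beta)|=|D|$. Then $N(\rho)$ is an integer in the open interval $(-|D|,|D|)$ congruent to $a$ modulo $|D|$. Since $0<a<|D|$, the only such integers are $a$ and $a-|D|=-b$. Both values are excluded by (iii), because $\rho\in\Oo$. (The value $0$ is not in this residue class, since $0<a<|D|$; note that $|D|\ge 2$, as $D\neq 1$ is a nonsquare and $a+b=|D|$ with $a,b\ge1$.) Hence no $\gamma,\rho$ satisfy \eqref{E:euclid} for this pair $\alpha,\beta$, even though $\alpha\Oo+\beta\Oo$ is principal. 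Therefore $\Oo$ is not generalized Euclidean.

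I do not expect a genuine obstacle. The only points to check are the integrality and cancellation in the half-integral form \eqref{eq:secondform}, and that condition (i) gives both the principality in Step 1 and the exclusion of $N(\rho)=0$.
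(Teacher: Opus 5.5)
Your proof is correct and follows the paper's argument: you use the same pair $\alpha=c$, $\beta=\sqrt{D}$, the same observation that they generate the unit ideal, and the same final step, namely that a remainder of norm less than $|D|$ in absolute value would have norm $a$ or $-b$. The only difference is in how you get $N(\rho)\equiv c^2 \pmod{D}$: the paper does this uniformly by noting that the conjugation $\sigma$ preserves $\sqrt{D}\Oo$, whereas you compute the norm in coordinates and cancel the $4$ in case \eqref{eq:secondform} using that $D$ is odd.
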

\noindent Proposition \ref{prop:quadraticheilbronn} does not quite follow from Proposition \ref{prop:pureorderheilbronn}; when $D\equiv 1\pmod{4}$, the corresponding order $\mathcal{O}$ is not $\Oo_{2,m}$ for any $m$.

Perhaps the chief novelty in the proofs (and statements) of Propositions \ref{prop:pureorderheilbronn} and \ref{prop:quadraticheilbronn} is the lack of any (explicit) reference to ramification. Instead, we phrase the arguments in terms of properties of the corresponding norm forms.

We suspect the following lemma, needed for the proof of Proposition \ref{prop:pureorderheilbronn}, is known to the experts. But lacking a suitable reference, we include the proof here.

\begin{lem}\label{lem:normform} Let $d\ge 2$, and let $\Oo_{d,m}$ be a pure order of degree $d$. Let $\tilde{N}(x_0,\dots,x_{d-1})$ be the \textsf{norm form} associated to the $\Q$-basis $1, m^{1/d}, \dots, m^{(d-1)/d}$ of $K_{d,m}$. That is, $\tilde{N}(x_0,\dots,x_{d-1})$ is the polynomial in $x_0,\dots,x_{d-1}$ defined by
\[ \tilde{N}(x_0,x_1,\dots,x_{d-1}) = \prod_{\sigma\colon K_{d,m}\hookrightarrow \C}(x_0 + x_1 \sigma(m^{1/d}) + \dots + x_{d-1} \sigma(m^{(d-1)/d})), \]
where $\sigma$ runs over all the embeddings of $K_{d,m}$ into $\C$. Then $\tilde{N}(x_0,\dots,x_{d-1}) \in x_0^{d} + m \Z[x_0,\dots,x_{d-1}]$. 
\end{lem}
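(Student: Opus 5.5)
The plan is to realize $\tilde N$ as a determinant. Put $\theta = m^{1/d}$ and let $\xi = x_0 + x_1\theta + \dots + x_{d-1}\theta^{d-1}$, regarded as an element of $K_{d,m}[x_0,\dots,x_{d-1}]$. I will consider the $\Q(x_0,\dots,x_{d-1})$-linear map ``multiplication by $\xi$'' on $K_{d,m}(x_0,\dots,x_{d-1})$ and write its matrix $M$ in the basis $1,\theta,\dots,\theta^{d-1}$. The standard fact that the norm equals the determinant of the multiplication map, which is the product of $\xi$ over the embeddings, gives $\tilde N(x_0,\dots,x_{d-1}) = \det M$. Equivalently, one can argue that the eigenvalues of $M$ are the conjugates $\sigma(\xi)$, since $M = \sum_j x_j C^j$ where $C$ is the companion matrix of $T^d - m$.

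Next I compute $M$ explicitly. Using $\theta^d = m$, we get $\xi\cdot\theta^k = \sum_j x_j \theta^{j+k}$. Whenever $j+k\ge d$, the power $\theta^{j+k}$ equals $m\theta^{j+k-d}$. So the entries of $M$ are as follows: the entry in row $i$, column $k$ is $x_{i-k}$ if $i\ge k$, and $m\,x_{i-k+d}$ if $i<k$. Thus $M$ is a ``twisted circulant'': its diagonal consists entirely of $x_0$, the entries below the diagonal are the variables $x_j$ with no factor of $m$, and every entry strictly above the diagonal is divisible by $m$ in $\Z[x_0,\dots,x_{d-1}]$. In particular $M$ has entries in $\Z[x_0,\dots,x_{d-1}]$, so $\det M \in \Z[x_0,\dots,x_{d-1}]$.

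Finally I reduce modulo $m$. Working in $(\Z/m\Z)[x_0,\dots,x_{d-1}]$, the matrix $M$ becomes lower triangular with diagonal entries all equal to $x_0$. Its determinant is therefore $x_0^d$, and hence $\det M \equiv x_0^d \pmod{m\Z[x_0,\dots,x_{d-1}]}$, which is the claim. Reducing modulo $m$ commutes with taking determinants because the determinant is a polynomial in the entries with integer coefficients.

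The only step needing care is the first identification of $\tilde N$ with $\det M$. It is standard linear algebra: after extending scalars to $\C(x_0,\dots,x_{d-1})$, the matrix $C$ is diagonalizable with eigenvalues $\sigma(\theta)$, because $T^d-m$ has distinct roots when $m\neq 0$. Hence $M = \sum_j x_j C^j$ has eigenvalues $\sum_j x_j\sigma(\theta)^j$, and $\det M$ is their product, which is $\tilde N$. I expect no real obstacle; the only thing to check is the indexing in the explicit form of $M$.
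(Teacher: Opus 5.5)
Your proof is correct, but it takes a different route from the paper. The paper never writes down a matrix. It views $\prod_j(x_0+x_1t_j+\dots+x_{d-1}t_j^{d-1})$ as a symmetric polynomial in indeterminates $t_j$, hence a $\Z[\mathbf{x}]$-polynomial in the elementary symmetric functions. It then specializes $t_j=\omega_j Z$, which kills every elementary symmetric function except $e_d=\pm Z^d$. Setting $Z=0$ gives $\tilde N(\mathbf{x},Z)=x_0^d+Z^dQ$ with $Q\in\Z[\mathbf{x},Z^d]$, and the paper finishes by substituting $Z=m^{1/d}$.

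You instead identify $\tilde N$ with $\det\bigl(\sum_j x_jC^j\bigr)$ by diagonalizing the companion matrix $C$ of $T^d-m$. This is legitimate: $m\ne 0$ gives distinct roots, and irreducibility makes those roots exactly the $\sigma(\theta)$. You then read off integrality and the congruence in one stroke from the fact that every entry above the diagonal carries a factor of $m$. Your indexing is right, since $j+k\le 2d-2$ means at most one reduction $\theta^d=m$ occurs.

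Your approach is more transparent and more general. It never uses that the conjugates of $\theta$ differ by roots of unity. The same argument shows that if $\theta$ is a root of a monic irreducible $T^d+c_{d-1}T^{d-1}+\dots+c_0$ with $m\mid c_i$ for all $i$, then the norm form on $1,\theta,\dots,\theta^{d-1}$ is $\equiv x_0^d \pmod{m}$. The reason is that every $\theta^n$ with $n\ge d$ lies in $m\Z[\theta]$, so the multiplication matrix is again lower triangular modulo $m$ with $x_0$ on the diagonal. The paper's argument avoids the norm-as-determinant identification and linear algebra entirely. It needs only the fundamental theorem of symmetric polynomials and the cyclic symmetry of the roots $\omega_jZ$, which is exactly the special structure of pure orders.
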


\begin{proof} Define a polynomial $\tilde{N}(\mathbf{x},\mathbf{t})$ in indeterminates $\mathbf{x}=x_0,\dots,x_{d-1}$ and $\mathbf{t}=t_0,\dots,t_{d-1}$ by setting
\[ \tilde{N}(\mathbf{x},\mathbf{t}):= \prod_{j=0}^{d-1} (x_0 + x_1 t_j + x_2 t_j^2+ \dots + x_{d-1} t_j^{d-1}). \]
Then $\tilde{N}(\mathbf{x},\mathbf{t})$ can  be viewed as a polynomial in $\mathbf{t}$ over $\Z[\mathbf{x}]$, symmetric with respect to the $t_j$. By the fundamental theorem of symmetric polynomials, $\tilde{N}(\mathbf{x},\mathbf{t})$ is a polynomial, with $\Z[\mathbf{x}]$ coefficients, in the elementary symmetric functions of the $t_j$.

Continuing, we introduce a new indeterminate $Z$, and we let $\omega_j := \exp(2\pi \mathrm{i} j/d)$. Replacing each $t_j$ by $\omega_j Z$, we conclude from our work in the last paragraph that 
\begin{equation}\label{eq:normZversion} \tilde{N}(\mathbf{x},Z):= \prod_{j=0}^{d-1} (x_0 + x_1 \omega_j Z + x_2 \omega_j^2 Z^2 + \dots + x_{d-1} {\omega_j}^{d-1} Z^{d-1})\end{equation}
is a polynomial over $\Z[\mathbf{x}]$ in the elementary symmetric functions of the $\omega_j Z$. These elementary symmetric functions all vanish, except for the last one (the $d$th), which is $\pm Z^d$; this can be read off from the formal expansion $\prod_{j=0}^{d-1} (T - \omega_j Z) = T^d - Z^d$. We conclude that $\tilde{N}(\mathbf{x},Z)$ is a polynomial in $x_0,\dots,x_{d-1}$ and $Z^d$, with integer coefficients. Setting $Z=0$ in \eqref{eq:normZversion}, we deduce that 
\[ \tilde{N}(\mathbf{x}, Z) = x_0^{d} + Z^d Q \quad\text{for some}\quad Q\in \Z[\mathbf{x},Z^d].\]
To finish, it remains only to observe that
\[ \tilde{N}(x_0,x_1,\dots,x_{d-1}) = \tilde{N}(\mathbf{x}, m^{1/d}), \]
so that 
\[ \tilde{N}(x_0,x_1,\dots,x_{d-1}) \in x_0^d + m \Z[x_0,\dots,x_{d-1}],\] as claimed.
\end{proof}

\begin{proof}[Proof of Proposition \ref{prop:pureorderheilbronn}] It is convenient to abbreviate $\Oo_{d,m}$ to $\Oo$ throughout the proof, and to write $N$ in place of $N_{K_{d,m}/\Q}$. Suppose for a contradiction that $\Oo$ is generalized Euclidean. Using conditions (i) and (ii), we may choose $A \in \Z$ with $A$ coprime to $m$ and $A^d\equiv a \bmod{m\Z}$. Since
\[ A\Oo + m^{1/d} \Oo \supset A\Z + m\Z = \Z, \]
the $\Oo$-ideal generated by $A$ and $m^{1/d}$ is all of $\Oo$, and in particular is principal. As $\Oo$ is generalized Euclidean, there is an $\alpha \in \Oo$ with $A\equiv \alpha \pmod{m^{1/d}\Oo}$ and $|N(\alpha)| < |N(m^{1/d})| = |m|$. Write
\[ \alpha = A_0 + A_1 m^{1/d} + \dots +A_{d-1} m^{(d-1)/d}.\]
Then $A\equiv \alpha \equiv A_0 \pmod{m^{1/d}\Oo}$, so that
$(A-A_0)/m^{1/d} \in \Oo$.
But 
\[ \frac{A-A_0}{m^{1/d}} = \frac{A-A_0}{m} m^{(d-1)/d}; \]
as $\Oo = \Z[m^{1/d}]$, for the final displayed expression to land in $\Oo$ requires that $(A-A_0)/m \in \Z$. Hence, $A_0 \equiv A\pmod{m}$. By Lemma \ref{lem:normform},
\begin{align*} N(\alpha) &= \tilde{N}(A_0,\dots,A_{d-1}) \\
&\equiv A_0^d \equiv A^d \equiv a\pmod{m\Z}. \end{align*}

We have $0 < a < |m|$ and $|N(\alpha)| < |m|$. If $0 \le N(\alpha) < |m|$, then the congruence $N(\alpha)\equiv a\pmod{m\Z}$ forces $a = N(\alpha)$. Similarly, if $-|m| < N(\alpha)\le 0$, then $-b=a-|m| = N(\alpha)$. In either case we contradict our condition (iii).
\end{proof}

\begin{proof}[Proof of Proposition \ref{prop:quadraticheilbronn}] This is very similar to the proof of Proposition \ref{prop:pureorderheilbronn}, so we only sketch it. Using (i) and (ii), choose $A \in \Z$, $\gcd(A,D)=1$, with $A^2\equiv a\pmod{D}$. Then $A\Oo+\sqrt{D}\Oo \supset A\Z + D\Z=\Z$. Hence, $A, \sqrt{D}$ generate the unit ideal of $\Oo$. Thus, assuming $\Oo$ is generalized Euclidean, there is an $\alpha \in \Oo$ with $A\equiv \alpha\pmod{\sqrt{D}\Oo}$ and $|N(\alpha)| < |N(\sqrt{D})| = |D|$, where $N$ denotes the norm $N_{\Q(\sqrt{D})/\Q}$. 

Let $\sigma$ denote the nontrivial automorphism of $\Q(\sqrt{D})$. Since $\sigma$ carries $\sqrt{D}\Oo$ into $\sqrt{D}\Oo$, the congruence \[ A\equiv \alpha\pmod{\sqrt{D}\Oo} \] implies \[ A\equiv \sigma(\alpha)\pmod{\sqrt{D}\Oo}. \] Thus, $A^2 \equiv \alpha\sigma(\alpha)\equiv N(\alpha)\pmod{\sqrt{D}\Oo}$, and \[ 
\frac{A^2-N(\alpha)}{\sqrt{D}} = \frac{A^2-N(\alpha)}{D} \sqrt{D} \in \Oo. \]
Regardless of whether we are in case \eqref{eq:firstform} or \eqref{eq:secondform}, this last containment forces $(A^2-N(\alpha))/D \in \Z$. Hence,
\[ a \equiv A^2 \equiv N(\alpha)\pmod{D\Z}. \]
Reasoning as in Proposition \ref{prop:pureorderheilbronn}, we deduce from $|N(\alpha)| < |D|$ that either $N(\alpha)=a$ or $N(\alpha)=-b$, contradicting (iii).
\end{proof}
 
\section{Proofs of Theorems \ref{thm:mainquadratic}--\ref{thm:maingeneralGRH}}

Theorems \ref{thm:mainquadratic}--\ref{thm:maingeneralGRH} follow immediately from Propositions \ref{prop:pureorderheilbronn} and \ref{prop:quadraticheilbronn} via the following lemmata.

\begin{lem}\label{lem:generalevend}
Let $\Oo_{d,m}$ be a pure order of even degree $d\geq 2$.
Provided $|m|$ is sufficiently large, there are positive integers $a, b$ with $|m|=a+b$ and conditions (i)--(iii) of Proposition \ref{prop:pureorderheilbronn} satisfied.
\end{lem}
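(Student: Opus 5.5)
The plan is to use that $d$ is even to pass to the quadratic subfield. Since $d=2e$, the field $K_{d,m}$ contains $\sqrt{m}=(m^{1/d})^e$, and $N_{K_{d,m}/\Q}=N_{\Q(\sqrt m)/\Q}\circ N_{K_{d,m}/\Q(\sqrt m)}$. I would first show that for $\alpha\in\Oo_{d,m}$ the relative norm $N_{K_{d,m}/\Q(\sqrt m)}(\alpha)$ lies in $\Z[\sqrt m]$. This should follow from the same symmetric-function argument as Lemma~\ref{lem:normform}: take the product only over the $e$ roots $\omega_j Z$ with $\omega_j^{e}=1$ (equivalently over $j$ even), get a polynomial in $Z^e$, and specialize $Z^e=\sqrt m$. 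Consequently every value of $N_{K_{d,m}/\Q}$ on $\Oo_{d,m}$ has the form $x^2-my^2$ with $x,y\in\Z$. So it suffices to find $a,b>0$ with $a+b=|m|$, $\gcd(a,m)=1$, $a$ a $d$th power modulo $m$, and neither $a$ nor $-b$ represented by $x^2-my^2$.

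\emph{Case $m<0$.} Here $x^2-my^2=x^2+|m|y^2\ge 0$, so $-b$ is never a norm. Moreover $0<a<|m|$ forces $y=0$, so $a$ is a norm only if $a$ is a perfect square. It therefore suffices to find $A$ coprime to $m$ such that the least positive residue $a$ of $A^d$ modulo $|m|$ is not a perfect square. I would take $A$ with $|m|<A^d<2|m|$, so that $a=A^d-|m|$. For large $|m|$ the interval $(|m|^{1/d},(2|m|)^{1/d})$ contains $\gg|m|^{1/d}$ integers. Among these, I would rule out the few with $\gcd(A,m)>1$ and the few making $A^d-|m|$ a square. For the latter, note that $A^d-z^2=|m|$ factors as $(A^e-z)(A^e+z)=|m|$, and I would bound the number of solutions by elementary means, or handle them by shifting $A$ slightly.

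\emph{Case $m>0$.} This is the main obstacle, because $x^2-my^2$ is indefinite and size alone no longer forbids $a$ or $-b$. I would use a local obstruction. If $q\nmid m$ is a prime with $\left(\frac{m}{q}\right)=-1$, then $q\mid x^2-my^2$ forces $q\mid x$ and $q\mid y$, so $q$ divides every norm to an even power. Also $x^2-my^2\equiv x^2\pmod m$, and if $m$ has a prime factor $p\equiv 3\pmod 4$, then $-b\equiv a\pmod m$ cannot be a square mod $p$ when $a$ is a nonzero square mod $p$. I would try to arrange the following. Produce $a\equiv A^d\pmod m$ with $q\,\|\,a$ for a suitable small nonresidue prime $q$; this can be done by choosing $A$ in a short range so that $A^d-km$ lands in the class $q\bmod q^2$, using that $q$ can be taken small (e.g.\ $q\ll m^{1/4+\varepsilon}$, or a cruder elementary bound). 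Then treat $-b=a-m$ similarly, with a second nonresidue prime $q'$ and $q'\,\|\,b$. To have both conditions imposed at once, I would vary $A$ over a range of length $\asymp m^{1/d}$ beyond $m^{1/d}$, where $a=A^d-m$ and $b=2m-A^d$, and count $A$ satisfying the congruences modulo $q^2q'^2$, which needs $q^2q'^2$ small compared to $m^{1/d}$.

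The step I expect to be hardest is exactly this simultaneous control of $a$ and $b$ when $m>0$ and $d$ is large, since the available range of $A$ has length only about $m^{1/d}$. As a fallback I would exploit the congruence obstruction modulo $m$ when $m$ has a prime factor $\equiv 3\pmod 4$ (or an odd power of $2$ behaves suitably). For the remaining $m$, all of whose odd prime factors are $\equiv1\pmod 4$, I would use the genus-theoretic obstruction coming from the Hilbert symbol $(n,m)_p$ at such primes $p\mid m$.
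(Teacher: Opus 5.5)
Your reduction to the form $x^2-my^2$ is correct, and so is your case $m<0$. There $-b$ is never a norm, and $a<|m|$ is a norm only if it is a perfect square. At most $\tau(|m|)=|m|^{o(1)}$ values of $A$ make $A^d-|m|$ a square, via $(A^{d/2}-z)(A^{d/2}+z)=|m|$. Inclusion--exclusion gives $\gg |m|^{1/d}/\log\log|m|$ integers $A\in(|m|^{1/d},(2|m|)^{1/d})$ coprime to $m$. These are not ``all but a few'', but that count suffices. This is a valid route for negative $m$, more elementary than the paper's.

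The case $m>0$, however, is not proved, and the ideas you propose for it fail.
\begin{itemize}
\item In your simultaneous scheme $a=A^d-m$, the condition $q\mid a$ can never hold. Since $d$ is even, $A^d$ is a square modulo $q$, while $\left(\frac{m}{q}\right)=-1$.
\item Shifting to $a=A^d-km$ would require $km$ and $(k+1)m$ to be $d$th-power residues modulo $q$ and $q'$. Even then the real obstruction remains: $A$ runs over an interval of length $\asymp m^{1/d}$. Unconditionally the nonresidue primes are only known to satisfy $q,q'\le m^{1/(4\sqrt{\mathrm{e}})+\epsilon}$, so $q^2q'^2$ may be about $m^{0.6}$. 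That exceeds $m^{1/d}$ for every even $d$, including $d=2$, so the count cannot be carried out.
\item Your fallback rests on a sign slip: $-b=a-m\equiv a\equiv (A^{d/2})^2\pmod m$. So $-b$, like $a$, is a nonzero square modulo every prime $p\mid m$, and the Hilbert symbols $(a,m)_p$ and $(-b,m)_p$ equal $1$ at all odd $p\mid m$. No obstruction can come from primes dividing $m$, whatever their residue mod $4$.
\end{itemize}

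The missing idea is to stop parametrizing $a$ by $A$ in a short interval. Instead, count $a\in(0,|m|)$ directly, detecting ``$a$ is a $d$th-power residue coprime to $m$'' by $\frac{1}{N}\sum_{\chi^d=\chi_0}\chi(a)$. The paper takes two primes $2<q_1<q_2\le |m|^{1/6}$ with $\left(\frac{m}{q_i}\right)=-1$, supplied by Pollack's Burgess-type result. It writes $a=q_1a'$ and $b=q_2b'$ with $q_1\nmid a'$ and $q_2\nmid b'$, which are congruence conditions on $a'$ modulo $q_1$ and $q_2^2$. The principal character then contributes about $\varphi(|m|)/(Nq_1q_2)\ge |m|^{0.6}$. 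Each nonprincipal character contributes $O(\sqrt{|m|}\log|m|)$ by P\'olya--Vinogradov, and $N=|m|^{o(1)}$. Because the full range $(0,|m|)$ is used, moduli up to $q_1q_2\approx |m|^{1/2-\epsilon}$ are affordable. Finally, $q_1\parallel a$ and $q_2\parallel b$ block both $a$ and $-b$ by exactly your even-valuation observation. This works for either sign of $m$.
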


\begin{lem}\label{lem:quadraticrep}
Let $\Oo$ be a quadratic order with $D\in\Z$ satisfying
\eqref{eq:firstform} or~\eqref{eq:secondform}.
Provided $|D|$ is sufficiently large, there are positive integers $a, b$ with $|D|=a+b$ and conditions (i)--(iii) of Proposition \ref{prop:quadraticheilbronn} satisfied.
\end{lem}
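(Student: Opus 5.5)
We plan to make condition (iii) hold for arithmetic reasons and not because of size. The key observation is that every element of $\Oo$ lies in $\Oo_K$, where $K=\Q(\sqrt{D})$. Suppose $p\nmid 2D$ is a prime with Kronecker symbol $\left(\frac{D}{p}\right)=-1$. Then $p$ is inert in $K$, and the $p$-adic valuation of $N_{K/\Q}(\alpha)$ is even for every $\alpha\in\Oo_K$. (In case \eqref{eq:firstform} with $D\equiv 1 \pmod 4$, one uses the discriminant $D$ or $4D$ appropriately; either way, for odd $p\nmid D$, being inert is governed by $\left(\frac{D}{p}\right)$.) So it suffices to produce $a,b>0$ with $a+b=|D|$, $\gcd(a,D)=1$, $a$ a square modulo $D$, $p\,\|\,a$ and $q\,\|\,b$, for two distinct primes $p,q\nmid 2D$ that are inert in $K$. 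Then neither $a$ nor $-b$ is a norm from $\Oo_K$, hence not from $\Oo$. This works uniformly for both signs of $D$.

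It is important that this works for $D>0$, because there size arguments cannot succeed. For example, if $K$ has class number one, every $a$ that is locally a norm is globally a norm, and $a$ being a square mod $D$ already gives local solvability at primes dividing $D$. So an inert prime to an odd power is essentially the only available obstruction. For $D<0$ one could shortcut: $-b$ is never a norm, and norms $x^2-Dy^2$ with $y\neq0$ are at least $|D|/4$. Taking $a=A^2-|D|$ with $A\in(\sqrt{|D|},\sqrt{5|D|/4})$, $\gcd(A,D)=1$ and $a$ not a square then suffices. The failures are controlled by $\tau(|D|)$ and by the count of integers coprime to $D$ in a short interval. Still, I would present the uniform argument.

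\textbf{Step 1.} Fix two distinct primes $p,q\nmid 2D$ with $\left(\frac{D}{p}\right)=\left(\frac{D}{q}\right)=-1$ and $p,q\le |D|^{\theta}$ for some $\theta<1/4$. Unconditionally, Burgess's bound for the character $\left(\frac{D}{\cdot}\right)$, which is nonprincipal since $D$ is not a square, gives many such primes below $|D|^{1/(4\sqrt e)+\epsilon}$ once $|D|$ is large. One has to handle carefully that this character has modulus dividing $4D$ and may be imprimitive.

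\textbf{Step 2.} Count the integers $1\le a<|D|$ satisfying all of the following:
\begin{enumerate}
\item $a\equiv 0\pmod p$ and $a\not\equiv 0\pmod{p^2}$;
\item $a\equiv |D|\pmod q$ and $a\not\equiv |D|\pmod{q^2}$;
\item $\gcd(a,D)=1$, and $a$ is a square modulo every odd prime power dividing $D$ (with the appropriate condition modulo the $2$-power part).
\end{enumerate}
I would detect condition (3) by writing the indicator of ``unit square mod $D$'' as an average of products $\prod_{\ell^e\| D}\frac12(1+\chi_\ell)$ of quadratic characters (plus the $2$-part). Expanding gives a main term of size about $|D|\,\phi(D)/(D\,pq\,2^{\omega(D)+O(1)})$. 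The remaining terms are incomplete sums of nonprincipal characters mod divisors of $D$ over an arithmetic progression mod $p^2q^2$. The P\'olya--Vinogradov inequality bounds each of these by $O(p^2q^2\sqrt{|D|}\log|D|)$, or better $O(\sqrt{|D|}\log|D|)$ after splitting the progression.

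\textbf{Step 3.} Compare. Since $2^{\omega(D)}=|D|^{o(1)}$ and $\phi(D)/D\gg 1/\log\log|D|$, the main term exceeds the total error of $2^{\omega(D)}\sqrt{|D|}\log|D|$ as soon as $pq\le|D|^{1/2-\epsilon}$, which Step 1 guarantees. Hence a suitable $a$ exists for $|D|$ large, and $b=|D|-a$ then satisfies $q\,\|\,b$.

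The main obstacle is Step 2: the equidistribution of square classes mod $D$ in progressions, uniformly in highly composite $D$ and including the $2$-adic bookkeeping. The point of ensuring $p,q$ are small (Step 1) is exactly to keep this character-sum error below the main term. If the error terms prove awkward, the fallback is to fix the square class by writing $a\equiv (pc)^2$ and choosing the residue $c$ via the Chinese remainder theorem.
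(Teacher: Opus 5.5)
Your proposal is correct and is essentially the paper's proof specialized to $d=2$. The ingredients match: small odd primes $p,q\nmid D$ with $\left(\frac{D}{p}\right)=\left(\frac{D}{q}\right)=-1$ (the paper gets these, including the imprimitive-character issue you flag, directly from Pollack's theorem, Proposition~\ref{prop:smallnonresidues}); a P\'olya--Vinogradov count over the characters with $\chi^2=\chi_0$ of the $a$ in the required progressions with $p\,\|\,a$ and $q\,\|\,b$; and the fact that an inert prime cannot divide a norm exactly once, which the paper phrases equivalently via a degree-one prime $Q_1$ above $q_1$, while your $D<0$ shortcut is a valid but unnecessary aside.
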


\begin{lem}\label{lem:generalGRH}
    Assume GRH.
    Let \( d \ge 2 \). For every $m \in \Z$ with $|m|$ sufficiently large, there are positive integers $a, b$ with $|m|=a+b$ and conditions (i)--(iii) of Proposition \ref{prop:pureorderheilbronn} satisfied.
\end{lem}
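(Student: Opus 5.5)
The plan is to violate condition (iii) through a local obstruction at suitable auxiliary primes, and to use GRH to find such primes that are small compared with $|m|$. The basic observation is this. Let $q$ be a prime with $q\nmid dm$ such that $T^d-m$ is irreducible modulo $q$. Then $q$ does not divide the index of $\Oo_{d,m}$ in its maximal order. By Dedekind--Kummer, $q$ is inert in $K_{d,m}$, so $d\mid v_q(N(\alpha))$ for every $\alpha\in\Oo_{d,m}$. It therefore suffices to produce $a,b>0$ with $a+b=|m|$ satisfying the following:
\begin{itemize}
\item[$\bullet$] $\gcd(a,m)=1$ and $a$ is a $d$th power modulo $m$;
\item[$\bullet$] $v_q(a)=1$ for one such inert prime $q$;
\item[$\bullet$] $v_{q'}(b)=1$ for another such inert prime $q'$.
\end{itemize}
Then neither $a$ nor $-b$ is a norm, because the relevant valuation is $1\not\equiv 0 \pmod d$.

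First I check that inert primes exist with positive density. Let $L$ be the splitting field of $T^d-m$. When $[L:\Q(\zeta_d)]=d$, the automorphism $m^{1/d}\mapsto \zeta_d m^{1/d}$ is a $d$-cycle on the roots. So Chebotarev gives a positive proportion of primes $q$ for which $T^d-m$ is irreducible mod $q$. The degenerate cases where $m$ interacts with $\Q(\zeta_d)$, such as $m=-4c^4$-type phenomena, must be set aside. I expect them to be handled by irreducibility of $T^d-m$ together with a short Capelli-type argument, or by replacing ``irreducible'' with ``all residue degrees share a common factor $>1$''. Either way the obstruction is the same: $v_q(N(\alpha))$ always lies in a proper subgroup of $\Z$.

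Next I construct $a$ and $b$. I take $a=q\,t^d$, with $q$ an inert prime that is in addition a $d$th power residue modulo $m$ and coprime to $m$. Then conditions (i) and (ii) hold automatically for any $t$ coprime to $m$. For $b=|m|-qt^d$, I pick a second inert prime $q'\ne q$ such that $|m|/q$ is a $d$th power modulo $q'$, but modulo $q'^2$ it is not a $d$th power that would force $q'^2\mid b$. This is achievable because $q$ need only be a non-$d$th power mod $q'$ in the right coset, since $m$ itself is not a $d$th power mod $q'$. By Hensel's lemma I then choose $t$ in a residue class modulo $q'^2$ with $v_{q'}(|m|-qt^d)=1$ and $\gcd(t,qm)=1$. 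I can take $t\le q'^2 m$-free small, say $t\ll q'^2$. Then $a=qt^d<|m|$ as soon as $q,q'\le(\log|m|)^{C}$, which holds for $|m|$ large.

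The main obstacle is the effective joint Chebotarev step. I need small primes $q,q'\ll(\log|m|)^{C}$ satisfying simultaneously a splitting condition in $L$ (inertness), a $d$th-power-residue condition modulo $m$, which is a splitting condition in a subfield of $\Q(\zeta_{|m|})$, and the relative $d$th-power condition between $q$ and $q'$. I would apply GRH-conditional effective Chebotarev (Lagarias--Odlyzko, with the bound $\ll(\log \disc)^2$) in the compositum of $L$ with the $d$th-power-residue field mod $m$, whose log-discriminant is $\ll_d |m|$-polylog in size. The delicate points are two. First, I must show the desired Frobenius class is nonempty, that is, that inertness in $L$ is compatible with being a $d$th power mod $m$; this requires controlling the intersection of $L$ with the cyclotomic field. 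Second, I must make sure the discriminant bounds keep $q,q'$ polylogarithmic. I would try to handle $q'$ by a second Chebotarev application in $L(q^{1/d})$.
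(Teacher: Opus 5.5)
\textbf{The gap: the choice $a=q\,t^d$ is incompatible with the local obstruction when $d$ is even.} Your construction forces the prime $q$ itself to be a $d$th power residue modulo $m$. You also need $q$ to be inert, or at least to have no degree-one prime above it. For even $d$ these two requirements often contradict each other, so the Frobenius class you flag as ``delicate'' is actually empty.

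Take $d=2$ and $m=p$ a prime with $p\equiv 1\pmod 4$. If $q\nmid 2p$ is a square modulo $p$, quadratic reciprocity gives $\left(\frac{p}{q}\right)=\left(\frac{q}{p}\right)=1$. So $q$ splits in $\Q(\sqrt p)$, and there is no valuation obstruction at $q$. For $d=2$ your fallback about common residue degrees is just inertness again, so it does not help.

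More generally, suppose $d$ is even and $\disc\Q(\sqrt m)\mid m$. Then $\left(\frac{m}{q}\right)$ is a quadratic character modulo $|m|$, hence trivial on $d$th powers. So no prime that is a $d$th power modulo $m$ is ever inert in $K_{d,m}$.

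Independently, inert primes need not exist at all. For $d=4$ and $m=-c^2$ with $2c$ not a square, $T^4-m$ is irreducible but $K_{4,m}=\Q(i,\sqrt{2c})$ is biquadratic. The case $m=-4c^4$ that you mention is not the relevant degeneracy, since then $T^4-m$ is reducible. Since the lemma covers every $d\ge 2$, the proposal does not prove it as stated.

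The paper avoids both problems. Even $d$ is handled by Lemma~\ref{lem:generalevend}. For odd $d$ it uses the weakest obstruction that works: $q\parallel N(\alpha)$ forces a degree-one prime above $q$. So it suffices that $m$ is not a $p$th power modulo $q$, where $p$ is the least prime factor of $d$.

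Two such primes $q_1<q_2\le(\log|m|)^3$ come from GRH--Chebotarev in the small field $\Q(m^{1/p},\zeta_p)$, applied to the fixed-point-free class of density $1/p$. Then $a=q_1a'$ and $b=q_2b'$ come from the character-sum count with P\'olya--Vinogradov. There $a'$ is arbitrary, so only the product $q_1a'$ must be a $d$th power modulo $m$, never $q_1$ itself. That is exactly what sidesteps the reciprocity obstruction above.

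For odd $d$ your route can probably be completed. The abelian part of $\Q(m^{1/d},\zeta_d)$ is $\Q(\zeta_d)$, so translations supply Frobenius elements compatible with the residue condition modulo $m$. Two details need correcting, though both are harmless for $a<|m|$:
\begin{enumerate}
\item The compositum you use has degree up to about $d^{\omega(m)+1}$. So your primes are only $|m|^{o(1)}$, not polylogarithmic.
\item The bound $t\ll q'^2$ with $\gcd(t,qm)=1$ needs a sieve or Jacobsthal-type argument.
\end{enumerate}
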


Our proofs of Lemmas \ref{lem:generalevend} and~\ref{lem:quadraticrep} rely on the following result of Pollack \cite[Theorem 1.1]{pollack17}.

\begin{prop}\label{prop:smallnonresidues} For each $\epsilon > 0$, there are $m_0 = m_0(\epsilon)$ and $\kappa=\kappa(\epsilon)>0$ for which the following holds. For all $m > m_0$ and every nontrivial Dirichlet character $\chi\bmod{m}$, there are more than $m^{\kappa}$ primes $q \le m^{\frac{1}{4\sqrt{e}} + \epsilon}$ for which $\chi(q) \notin \{0,1\}$.
\end{prop}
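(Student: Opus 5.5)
We would argue by contradiction, combining the Burgess bound with a Vinogradov-type counting argument. Fix $\epsilon>0$ and let $\chi$ be a nontrivial character mod $m$. Set $y=m^{\frac{1}{4\sqrt e}+\epsilon}$ and $x=m^{\frac14+\epsilon/2}$, so that $u:=\log x/\log y<\sqrt e$ once $\epsilon$ is small, and therefore $1-2\log u\ge c(\epsilon)>0$. Fix a small $\eta>0$ and take $0<\kappa<\eta$. Suppose the exceptional set
\[ E:=\{q\le y \text{ prime}: q\nmid m,\ \chi(q)\ne 1\} \]
has at most $m^{\kappa}$ elements. We aim to show that this forces $\sum_{n\le x}\chi(n)$, restricted to a suitable sifted set, to be too large, contradicting Burgess.

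\emph{Step 1: sieve out small primes.} Primes $q\in E$ with $q\le m^\eta$ cannot be controlled by the cardinality of $E$, since $\sum 1/q$ over them may be as large as $\log\log m$. To get rid of them, we restrict to the set $\mathcal S$ of integers $n\le x$ that are coprime to $m$ and have no prime factor $\le z:=m^{\eta}$. By the fundamental lemma of sieve theory, $\#\mathcal S\asymp x\prod_{p\le z,\,p\nmid m}(1-1/p)$, and $\sum_{n\in\mathcal S}\chi(n)$ is a combination, with small error, of sums $\chi(d)\sum_{k\le x/d,\,(k,m)=1}\chi(k)$ over $d\mid P(z)$ with $d\le m^{O(\eta)}$. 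Each inner range still exceeds $m^{1/4+\epsilon/4}$. We then apply Burgess: reduce to the primitive character inducing $\chi$, remove the coprimality condition to $m$ by Möbius inversion, and use Burgess with large $r$. The conclusion is $\sum_{n\in\mathcal S}\chi(n)=o(\#\mathcal S)$.

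\emph{Step 2: lower bound from the structure of $\chi$.} For $n\in\mathcal S$ we have $\chi(n)=1$ unless $n$ has a prime factor $p>y$ or a prime factor $p\in E\cap(z,y]$. In either bad case, $|\chi(n)|\le 1$ gives
\[ \sum_{n\in\mathcal S}\chi(n)\ \ge\ \#\mathcal S-2\,\#\{n\in\mathcal S: \exists p\mid n,\ p>y\}-2\sum_{p\in E,\,p>z}\#\{n\in\mathcal S: p\mid n\}. \]
The last term is at most $2x\sum_{p\in E,\,p>z}1/p\le 2x\,m^{\kappa-\eta}$, which is negligible relative to $\#\mathcal S\gg x/(\eta\log m)$. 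For the middle term, the sifted analogue of the Dickman/Buchstab estimate gives
\[ \#\{n\in\mathcal S:\exists p\mid n,\ p>y\}=(\log u+o(1))\,\#\mathcal S; \]
here one applies the sieve again to $n=pk$ with $k\le x/p$ free of primes $\le z$, and uses that $p>y>\sqrt x$ forces at most one such $p$. Hence $\sum_{n\in\mathcal S}\chi(n)\ge(1-2\log u-o(1))\#\mathcal S$. This contradicts Step 1, and so $\#E>m^\kappa$, as claimed.

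\emph{Main obstacle.} We expect the main obstacle to be Step 1 for arbitrary moduli $m$. Burgess's bound with $r>3$, which is needed to get down to exponent $\frac14+\epsilon$, is only known for cube-free moduli or for primitive characters of special shape. Our first attempt is to pass from $\chi$ to its conductor $f$ and run the argument there, replacing $m$ by $f$ in the ranges. If $f$ is much smaller than $m$, the same argument gives primes up to $f^{1/(4\sqrt e)+\epsilon}$, which is even better. For non-cube-free $f$, we would fall back on Postnikov/Iwaniec-type bounds for prime-power moduli, combined multiplicatively via the Chinese remainder theorem. Failing that, we would accept the case restrictions that Pollack's original proof must handle by such means.
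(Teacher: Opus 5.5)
The paper gives no proof of this statement; it imports it from Pollack's work, so your argument has to stand on its own. Its architecture is sound. You run Vinogradov's trick on $m^{\eta}$-rough integers. Then $\#E\le m^{\kappa}$ with $\kappa<\eta$ makes the exceptional primes in $(z,y]$ negligible, while rough integers with a prime factor in $(y,x]$ have relative density about $\log u<\frac12$. This is exactly what upgrades a Linnik--Vinogradov-type count to $m^{\kappa}$ primes.

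The genuine gap is Step 1, which you flag but do not close. It needs $\sum_{k\le X}\chi^*(k)\ll X m^{-\delta}$ uniformly for $X\ge m^{1/4+\epsilon/4}$, i.e.\ Burgess with $r$ arbitrarily large. The standard Burgess bound with $r\ge 4$ requires the conductor $f$ to be cube-free; with only $r\le 3$ the same argument gives the exponent $\frac{1}{3\sqrt e}$ instead of $\frac{1}{4\sqrt e}$. Your fallbacks do not repair this:
\begin{enumerate}
\item passing to the conductor does not make it cube-free;
\item Postnikov/Iwaniec-type bounds only help when the modulus has a large smooth or high prime-power part;
\item the Chinese remainder theorem does not combine short-interval bounds for local components, since an interval sum of $\chi_1\chi_2$ does not factor.
\end{enumerate}
Accepting case restrictions proves a weaker theorem, whereas the statement covers every $m$ and every nontrivial $\chi$.

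The missing idea is to reduce to characters of prime order. If $\chi$ has order $k$ and $\ell\mid k$ is prime, set $\psi=\chi^{k/\ell}$, a nontrivial character mod $m$. For $q\nmid m$, $\psi(q)\ne 1$ forces $\chi(q)\notin\{0,1\}$, so it suffices to prove the claim for $\psi$. The conductor of a character of prime order $\ell$ is cube-free apart from a possible factor $8$:
\begin{enumerate}
\item for odd $p\ne\ell$, its $p$-component is trivial on the image of $1+p\Z$ (a $p$-group), so its conductor divides $p$;
\item for odd $p=\ell$, it is trivial on $(1+p\Z)^{\ell}=1+p^{2}\Z$, so its conductor divides $p^{2}$;
\item at $p=2$, it is trivial if $\ell$ is odd, and has conductor dividing $8$ if $\ell=2$.
\end{enumerate}
A factor $8$ is disposed of by splitting $n$ into classes $a$ mod $8$. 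Writing $n=a+8t$ turns each piece into $\psi'(8)\sum_t\psi'(t+a\bar 8)$, an interval sum for a character $\psi'$ of cube-free conductor. Then Burgess with every $r$ is available, and your Steps 1 and 2 go through.

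Three smaller points. First, do not replace $m$ by $f$ in the ranges, since that yields only about $f^{\kappa}$ primes, which is useless when $f$ is small. Keep $x,y,z$ defined in terms of $m$, and note that Burgess for $\chi^*$ modulo $f\le m$ is uniform in $f$. Second, the Buchstab-type count in Step 2 has an error of relative size $O(\eta)$, not $o(1)$; it comes from primes $p$ so close to $x$ that $x/p$ is too short to sieve. This is harmless because $\eta$ is fixed small compared with $1-2\log u$. Third, the inequality in Step 2 should be stated for the real part.
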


\begin{proof}[Proofs of Lemmas~\ref{lem:generalevend} and \ref{lem:quadraticrep}] In order to treat both Lemmas in a unified fashion, we adopt the following convention. When Lemma \ref{lem:generalevend} is in view, the letters $d,m$ have the meanings of that lemma, $K=K_{d,m}$ and $\mathcal{O}=\mathcal{O}_{d,m}$. If we are considering Lemma \ref{lem:quadraticrep}, we take $d=2$, $m=D$, $K= \Q(\sqrt{D})$ and $\mathcal{O}$ the quadratic order appearing in that lemma.

To start with, we produce positive integers $a$ and $b$ summing to $|m|$ and satisfying conditions (i) and (ii) in Proposition \ref{prop:pureorderheilbronn} (respectively, Proposition \ref{prop:quadraticheilbronn}). At the end of the proof we will see that our particular construction of $a$ and $b$ also satisfies condition (iii) of these Propositions. 

%

Let $\chi$ be the quadratic character mod $4|m|$ given by $\chi(n) = (\frac{4m}{n})$ (the Kronecker symbol),
and note that if $\chi(q)\not\in\{0,1\}$ for the prime $q$, then $q$ is odd and $(\frac{m}{q})=-1$.
By Proposition \ref{prop:smallnonresidues}, it follows that for all \( \epsilon > 0 \), there is a $\kappa > 0$ with
\[ \# \left\{ q \le |m|^{\frac{1}{4\sqrt{\mathrm{e}}} + \epsilon} : q~\text{odd prime}, \left(\frac{m}{q}\right) = -1  \right\} \ge |m|^{\kappa} \]
whenever \( m \) is sufficiently large in absolute value. Since \( 4\sqrt{\mathrm{e}} > 6 \), we can take, for \( m \) large in absolute value, \( 2 < q_1 < q_2 \le |m|^{1/6} \) with \( (\frac{m}{q_1}) = (\frac{m}{q_2}) = -1 \). (In particular, \( q_1, q_2 \nmid m\).)

We now proceed to count the number of integers \( a', b' \) for which
\begin{equation}\label{eq:sumrep} q_1 a' + q_2 b' = |m|\end{equation}
and for which all of the following hold:
\begin{enumerate}
    \item[1.] \( 0 < a' < \dfrac{|m|}{q_1} \);
    \item[2.] \( q_1a' \) is a $d$th power mod \( m \) with $\gcd(q_1 a',m)=1$;
    \item[3.] \( q_1 \nmid a' \);
    \item[4.] \( q_2 \nmid b' \).
\end{enumerate}
If at least one such pair of \( a', b' \) exist, then conditions (i) and (ii) in Proposition \ref{prop:pureorderheilbronn} (or Proposition \ref{prop:quadraticheilbronn}) are satisfied by taking \( a = q_1a', b = q_2b'. \)

Equation \eqref{eq:sumrep} implies that $b' = (|m|-q_1a')/q_2$ is entirely determined by $a'$. So we can phrase the problem entirely in terms of counting certain integers $a'$. To ensure $b'$ is an integer, we tack on the condition 
\begin{enumerate}
\item[5.] \( a' \equiv |m|q_1^{-1} \pmod{q_2} \).
\end{enumerate}
Our task then becomes counting all the integers $a'$ simultaneously satisfying conditions 1--5, thinking of condition 4 as \( a' \not\equiv |m|q_1^{-1} \pmod{q_2^2} \). 

We detect condition 2 using character sums; we have that
\[ \frac{1}{N}\sum_{\chi^d = \chi_0} \chi(a) = \begin{cases}
    1 & \text{if }a \text{ is a $d$th power mod }m \text{ and }\gcd(a, m) = 1; \\ 0 &\text{otherwise,}
\end{cases} \]
where \( N \) is the number of characters \( \chi \) mod \( m \) satisfying \( \chi^d = \chi_0 \).

We use an inclusion-exclusion argument to account for conditions 3 and 4. We thus write our count as
\[ \frac{1}{N}\sum_{0 \le i, j \le 1} (-1)^{i+j}{\sum_{a'}}^{(i,j)}\sum_{\chi^d = \chi_0}\chi(q_1a') \]
where the sums \( {\sum_{a'}}^{(i,j)} \) are taken over all \( a' \) satisfying conditions 1 and 5, but possibly violating conditions 3 and/or 4: here \( i = 1 \) if (and only if) condition 3 is violated, and \( j = 1 \) if (and only if) 4 is violated.

We proceed to estimate the sums \( {\sum_{a'}}^{(i,j)}\sum_{\chi^d = \chi_0}\chi(q_1a') \). First, we look at the contribution of the nonprincipal characters. A violation of condition 3 means that we have \( a' \equiv 0 \pmod{q_1} \). Similarly, a violation of condition 4 means that condition 5 can be strengthened to \( a' \equiv |m|q_1^{-1} \pmod{q_2^2} \). Thus, depending on \( i \) and \( j \), we have by the Chinese Remainder Theorem that each \( a' \) lies in one specific residue class mod \( m_{i,j} := q_1^{i}q_2^{j+1} \), i.e., we can write \( a' = a_{i,j} + nm_{i,j} \) where \( n \) lies in an interval \( I_{i,j} \) of length \( |m|/(q_1^{i+1}q_2^{j+1}) \). We then write, for each \( \chi \ne \chi_0 \),
\begin{align*}
    {\sum_{a'}}^{(i,j)} \chi(q_1a') &= \chi(q_1){\sum_{a'}}^{(i,j)} \chi(a') \\
    &= \chi(q_1)\sum_{n \in I_{i,j}}\chi(nm_{i,j} + a_{i,j}) \\
    &= \chi(q_1)\chi(m_{i,j})\sum_{n\in I_{i,j}}\chi(n + a_{i,j}m_{i,j}^{-1}) \\
    &= O\left(\sqrt{|m|}\log{|m|}\right)
\end{align*}
by P\'{o}lya--Vinogradov. Here we write \( m_{i,j}^{-1} \) for the inverse of \( m_{i,j} \) mod \( m \).

We now look at the contribution of the principal character. We write
\begin{align*}
    {\sum_{a'}}^{(i,j)} \chi_0(q_1a') &= {\sum_{a'}}^{(i,j)} \chi_0(a) \\
    &= {\sum_{a'}}^{(i,j)} \mathbf{1}_{\gcd(a',m) = 1} \\
    &= {\sum_{a'}}^{(i,j)} \sum_{e \mid \gcd(a',m)} \mu(e) \\
    &= \sum_{e \mid m}\mu(e){\sum_{a'}}^{(i,j)} \mathbf{1}_{e \mid a'} \\
    &= \sum_{e \mid m}\mu(e)\left(\frac{|m|}{eq_1^{i+1}q_2^{j+1}} + O(1)\right) \\
    &= \frac{\varphi(|m|)}{q_1^{i+1}q_2^{j+1}} + O\left(2^{\omega(|m|)}\right).
\end{align*}

Collecting estimates,
\[ {\sum_{a'}}^{(i,j)}\sum_{\chi^d = \chi_0}\chi(q_1a') = \frac{\varphi(|m|)}{q_1^{i+1}q_2^{j+1}} + O\left(2^{\omega(|m|)}\right) + O\left(N\sqrt{|m|}\log|m|\right). \]

Putting everything together, we get that, as \( |m| \to \infty \), the number of integer solutions \( a, b \) to \eqref{eq:sumrep} satisfying conditions 1--5 is
\[ \frac{\varphi(|m|)}{Nq_1q_2}\left(1 - \frac{1}{q_1}\right)\left(1 - \frac{1}{q_2}\right) + O\left(\frac{2^{\omega(|m|)}}{N}\right) + O\left(\sqrt{|m|}\log{|m|}\right). \]

Let us estimate the various terms in this expression. Writing $\tau$ for the divisor function, we have $2^{\omega(|m|)} \le \tau(|m|) \le |m|^{o(1)}$ (see Theorem 315 in \cite{HW08} for this last bound on $\tau$). Since the multiplicative group mod $m$ has a direct product decomposition into at most $\omega(|m|)+1$ cyclic groups (see for instance \cite[Theorem 3, p.\ 44]{IR90}), \[ N \le d \cdot d^{\omega(|m|)} \le d \cdot 2^{d \omega(|m|)} \le d \cdot \tau(|m|)^{d} \le |m|^{o(1)}. \] Furthermore, Theorem 327 in \cite{HW08} ensures that for sufficiently large $|m|$, we have 
\( \varphi(|m|) \ge |m|^{0.99} \). Putting these estimates together, keeping in mind that $q_1, q_2 \le |m|^{1/6}$, we see that the main term in the last display exceeds $m^{0.6}$ for large $|m|$, while the contribution of the $O$-terms is smaller than $m^{0.51}$ (say). In particular, the displayed expression is positive for all large $|m|$, allowing us to find our $a', b'$.


Finally we return to proving our choices of $a$ and $b$ satisfy part (iii) of Propositions \ref{prop:pureorderheilbronn} and \ref{prop:quadraticheilbronn}. Suppose that $a = N(\alpha)$, with $\alpha \in \Oo$. We consider the prime factorization of $\alpha\Oo_K$ in the Dedekind domain $\Oo_K$. By our condition 3 above, \( q_1 \parallel a \). Thus, the prime factorization of $\alpha\Oo_K$ contains a unique prime ideal $Q_1$ lying above $q_1$; furthermore, $Q_1$ has degree one.  Then \(  \Oo_K/Q_1 \cong \Z/q_1 \Z\), with the isomorphism induced by the inclusion of \( \Z \) into \( \Oo_K \). Since \( m = (m^{1/d})^d \) is a \( d \)th power in \( \Oo_K \), it is also a $d$th power in \( \Oo_K/Q_1 \) and thus in \( \Z/q_1\Z \). But $d$ is even, and so the stipulation that $m$ is a quadratic nonresidue modulo \( q_1 \) implies that $m$ is not a $d$th power modulo $q_1$. This contradiction establishes that $a$ is not a norm in \( \Oo \). An entirely analogous argument shows that $-b$ is not a norm in \( \Oo \). Here we use that $q_2 \parallel b$ (by condition 4 above) and that $m$ is a quadratic nonresidue modulo $q_2$. 
\end{proof}

We now settle Lemma \ref{lem:generalGRH}. In view of Lemma \ref{lem:generalevend}, we may assume $d$ is odd. We make use of the following GRH-conditional version of the Chebotarev density theorem:
\begin{prop}\label{chebotarev} Assume GRH. Let \( E/\Q \) be a finite Galois extension of number fields, and let \( G \) be the Galois group of \( E/ \Q\). Let \( C \) be a subset of \( G \) that is invariant under conjugation. Let \( \pi_C(x) \) denote the number of primes up to \( x \) whose associated Frobenius conjugacy class is in \( C \). Let \( P\) be the product of the distinct rational primes ramifying in \( E \). Then 
\[ \left|\pi_C(x) - \frac{|C|}{|G|}\Li(x)\right| \le A |C| x^{1/2}\log{\left([E : \Q] P x \right)}  \]
for some absolute constant \( A \).
\end{prop}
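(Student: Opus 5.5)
The proposition is a repackaging of the Lagarias--Odlyzko effective Chebotarev theorem under GRH, combined with Serre's bound for the discriminant in terms of the ramified primes. I would not reprove the analytic part. The plan is to quote it and then check that the error term can be put in the stated shape.

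First I invoke Lagarias--Odlyzko, in the form recorded by Serre (\emph{Quelques applications du th\'eor\`eme de densit\'e de Chebotarev}, Th\'eor\`eme 4). Assuming GRH for the Dedekind zeta function of $E$, for all $x\ge 2$ it gives
\[ \left|\pi_C(x) - \frac{|C|}{|G|}\Li(x)\right| \le c_1\,\frac{|C|}{|G|}\, x^{1/2}\bigl(\log d_E + n_E \log x\bigr), \]
with $c_1$ absolute, $n_E=[E:\Q]=|G|$, and $d_E$ the absolute discriminant. Here $\pi_C$ counts unramified primes with Frobenius in $C$. Any ambiguity about whether ramified primes are counted changes $\pi_C(x)$ by at most $\omega(P)\le \log P/\log 2$. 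This is absorbed into the final bound because $|C|\ge 1$ whenever $C$ is nonempty. If $C$ is empty, both sides vanish.

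Second, I bound $\log d_E$ using Serre's Proposition 6, a consequence of Hensel's bound on the different:
\[ \log d_E \le (n_E-1)\log P + n_E\log n_E. \]
Substituting, the error becomes at most
\[ c_1\frac{|C|}{|G|}x^{1/2}\, n_E\bigl(\log P + \log n_E + \log x\bigr) = c_1 |C|\, x^{1/2}\log\bigl([E:\Q]Px\bigr), \]
which is exactly the claimed form with $A=c_1$.

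Third, I deal with small $x$, meaning $x<2$ or whatever threshold the cited theorem requires. For such $x$ we have $\pi_C(x)=O(1)$ and $\Li(x)=O(1)$, with the convention for $\Li$ near $2$. The right-hand side is bounded below by a positive absolute constant times $|C|$ once $x\ge 2$, so enlarging $A$ covers any bounded range. For $x<2$ the statement is vacuous or trivial, as long as $\Li$ is interpreted as $\int_2^x$ and $P$ or $[E:\Q]$ makes the logarithm at least $\log 2$.

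The only real obstacle is bookkeeping rather than mathematics. I need to match the precise normalization in the cited source: which primes are counted, the range of $x$, and whether $\log d_E$ or $\log d_E + n_E\log x$ appears. I also need to make sure that all the resulting constants can be absorbed into a single absolute $A$. The factor $|C|/|G|\cdot n_E=|C|$ arises cleanly only because $E/\Q$ is Galois, so that $n_E=|G|$.
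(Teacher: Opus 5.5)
Your proposal is correct and is essentially the paper's argument. The paper simply cites Serre's $(20_R)$ with $K=\Q$, and your combination of Th\'eor\`eme~4 (Lagarias--Odlyzko under GRH) with the Hensel-type bound $\log d_E \le (n_E-1)\log P + n_E\log n_E$ and the identity $n_E=|G|$ is exactly how Serre derives $(20_R)$.

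One small correction: the inequality must be read for $x\ge 2$. As $x\to 1^+$ the term $\Li(x)$ is unbounded under either convention, while the right-hand side stays bounded, so your claim that the range $x<2$ is ``vacuous or trivial'' is not accurate. This is harmless here, since the paper only applies the proposition with $x=(\log|m|)^3$.
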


Proposition \ref{chebotarev} is the special case of Serre's \cite[eq.\, ($20_{R}$)]{MR644559} where $K=\Q$.

\begin{proof}[Proof of Lemma \ref{lem:generalGRH}] Let $d > 1$ be odd, and let \( p > 2 \) be the least prime dividing \( d \). Arguing as in the proof of Lemma \ref{lem:generalevend}, it is enough to show that there are two primes $q_1< q_2 \le |m|^{1/6}$, not dividing $m$, modulo which $m$ is not a $p$th power.

We let \( \zeta_p = \mathrm{e}^{2\pi i/p}  \) and we consider \( E := \Q(m^{1/p}, \zeta_p) \), the splitting field of the polynomial \( T^p - m \). As $E$ is the normal closure of $K:=\Q(m^{1/p})$, every prime ramifying in $E$ divides $\disc(K)$, which in turn divides $\disc(T^p-m) = (-1)^{(p-1)/2}p^p m^{p-1}$. Hence: Every prime ramifying in $E$ divides $mp$. This observation will be useful momentarily when we apply Proposition \ref{chebotarev}.

Let $q$ be a prime not dividing $mp$. In order that $m$ not be a $p$th power modulo $q$, it is sufficient that the action of the Frobenius conjugacy class associated to $q$, on the roots of $T^p-m$, have no fixed points. We let \( C \) be the (conjugation-invariant) set of fixed-point-free elements of $G$.

The Galois group \( G \) of \( E/\Q \) is made up of $p(p-1)$ elements \( \sigma_{j,k} \) for \( 0 \le j \le p-1 \), \( 1 \le k \le p-1 \), determined by the conditions that \( \sigma_{j,k}(m^{1/p}) := m^{1/p}\zeta_p^j \), \( \sigma_{j,k}(\zeta_p) := \zeta_p^k \). Furthermore, it is straightforward to check that the set $C$ defined above is precisely the collection of \( \sigma_{j,1} \) with \( j \ne 0 \). Hence, \( |C|/|G| = 1/p \). 

Applying Proposition \ref{chebotarev}, we conclude that $|\pi_C(x) - \frac{1}{p}\Li(x)| \le A_d x^{1/2}\log|pmx|$
for some constant \( A_d \) depending only on \( d \). It follows that
\begin{equation}\label{eq:usedcheb} \pi_C(x) \ge \frac{1}{p}\Li(x) - A_dx^{1/2}\log|pmx|. \end{equation}

It is certainly possible to choose our two primes \( q_1, q_2 \) as long as \begin{equation}\label{eq:crucialinequality} \pi_C(|m|^{1/6}) \ge 2 + \omega(p|m|). \end{equation} We will show this inequality holds for $x = (\log|m|)^3$, provided $m$ is large enough. As $(\log|m|)^3 \le |m|^{1/6}$ for large $|m|$, this suffices.

Allowing implied constants to depend on $d$ (which is fixed), we have \( \Li(x) \gg \log^3|m|/\log\log|m| \), while \( x^{1/2}\log|pmx| \ll \log^{5/2}|m| \). Hence, \eqref{eq:usedcheb} yields \( \pi_C(x) \gg \log^3|m|/\log\log|m| \) for large $|m|$. On the other hand, \( \omega(p|m|) \ll \log|m|, \) which is of smaller order. Thus \eqref{eq:crucialinequality} holds whenever $|m|$ is sufficiently large, as desired.
\end{proof}

\subsection*{Acknowledgements}
We thank the referee for a careful reading and helpful suggestions.

\bibliography{citation}
\bibliographystyle{amsalpha}
\end{document}